\numberwithin{equation}{subsection}
\newtheorem{theorem}{Theorem}[section]
\newtheorem{lemma}[theorem]{Lemma}
\newtheorem{conjecture}[theorem]{Conjecture}
\newtheorem{remark}[theorem]{Remark}
\begin{document}

\title{Periods of CY $n$-folds and mixed Tate motives, a numerical study}

\author{Wenzhe Yang}
\address{SITP Stanford University, CA, 94305}
\email{yangwz@stanford.edu}

\begin{abstract}
In the mirror symmetry of Calabi-Yau threefolds, the instanton expansion of the prepotential has a constant term that is a rational multiple of $\zeta(3)/(2 \pi i)^3$, the motivic origin of which has been carefully studied in the author's paper with M. Kim. The Gamma conjecture claims that higher zeta values in fact appear in the theory of Calabi-Yau $n$-folds for $n \geq 4$. Moreover, a natural question is whether they also have a motivic origin. In this paper, we will study the limit mixed Hodge structure (MHS) of the Fermat pencil of Calabi-Yau $n$-folds at the large complex structure limit, which is actually a mixed Hodge-Tate structure. We will compute the period matrix of this limit MHS for the cases where $n=4,5,6,7,8,9,10,11,12$ by numerical method, and our computations have shown the occurrence of higher zeta values, which provide evidence to the Gamma conjecture. Furthermore, we will also provide a motivic explanation to our numerical results.
\end{abstract}

\maketitle
\setcounter{tocdepth}{1}
\vspace{-13pt}
\tableofcontents
\vspace{-13pt}

\section{Introduction}
The mirror symmetry of Calabi-Yau threefolds has occupied a very important place in the area of algebraic geometry. Roughly speaking, mirror symmetry is a conjecture that predicts the existence of mirror pairs $(M, W)$ of Calabi-Yau threefolds such that the complexified K\"ahler moduli space of $M$ is isomorphic to an open subset of the complex moduli space of $W$. This open subset is a neighborhood of a special boundary point known as the large complex structure limit \cite{CoxKatz,MarkGross}. The isomorphism between them is called mirror map, which is constructed by the identification of certain functions \cite{PhilipXenia,CoxKatz,MarkGross}. One function that plays a crucial role in mirror symmetry is prepotential, and it admits an instanton expansion with a constant term
\begin{equation}
Y_{000}=-3 \chi (M) \frac{\zeta(3)}{(2 \pi i)^3} +r,~ r\in \mathbb{Q},
\end{equation}
where $\chi(M)$ is the Euler characteristic of $M$. When the deformation of the mirror threefold $W$ forms a one-parameter algebraic family, the occurrence of $\zeta(3)$ in $Y_{000}$ has been carefully studied in the paper \cite{KimYang}, which also provides a motivic explanation to it. More precisely, the limit MHS on $H^3(W,\mathbb{Q})$ at the large complex structure limit splits into the direct sum
\begin{equation}
\mathbb{Q}(-1) \oplus \mathbb{Q}(-2) \oplus \mathbf{M},
\end{equation}
where $\mathbf{M}$ is an extension of $\mathbb{Q}(-3)$ by $\mathbb{Q}(0)$. The dual of $\mathbf{M}$ is an extension of $\mathbb{Q}(0)$ by $\mathbb{Q}(3)$, and its image in the group 
\begin{equation}
\text{Ext}^1_{\textbf{MHS}_{\mathbb{Q}}}\left(\mathbb{Q}(0),\mathbb{Q}(3)\right) \simeq  \mathbb{C}/(2 \pi i)^3\,\mathbb{Q}
\end{equation}
is the coset of a rational multiple of $(2 \pi i)^3Y_{000}$. The motivic nature of $\zeta(3)$ in $Y_{000}$ follows from the theory of mixed Tate motives and Ayoub's theory of motivic nearby cycle functor \cite{KimYang}. For a general mirror pair of Calabi-Yau threefolds with arbitrary Hodge number $h^{1,2}(W)$, it has been conjectured in the thesis \cite{Yangthesis} that the limit MHS on $H^3(W,\mathbb{Q})$ at the large complex structure limit splits into the direct sum
\begin{equation}
\mathbb{Q}(-1)^{h^{1,2}(W)} \oplus \mathbb{Q}(-2)^{h^{1,2}(W)} \oplus \mathbf{M}.
\end{equation}
The dual of $\mathbf{M}$ is an extension of $\mathbb{Q}(0)$ by $\mathbf{\mathbb{Q}}(3)$, whose image in $ \mathbb{C}/(2 \pi i)^3\,\mathbb{Q}$ is the coset of a rational multiple of $(2 \pi i)^3 Y_{000}$, thus it reveals the motivic nature of the $\zeta(3)$ in the mirror symmetry of Calabi-Yau threefolds. The Gamma conjecture claims that higher zeta values do appear in the theory of Calabi-Yau $n$-folds \cite{Galkin}, and it is natural to ask do they also have a motivic origin? 

The motivation of this paper is to study the Fermat pencil of Calabi-Yau $n$-folds, and try to see whether it can provide evidence to the Gamma conjecture, and moreover, whether these zeta values are motivic in nature. The Fermat pencil is a one parameter family of hypersurfaces in $\mathbb{P}^{n+1}$ defined by
\begin{equation}
\mathscr{X}_\psi :~\{~\sum_{i=0}^{n+1} X^{n+2}_i -(n+2)\, \psi\, \prod_{i=0}^{n+1} X_i =0~ \} \subset \mathbb{P}^{n+1},
\end{equation}
while from the adjunction formula, a smooth fiber $\mathscr{X}_\psi$ is a Calabi-Yau $n$-fold. The Picard-Fuchs equation of this family is an $(n+1)$-th order ODE \cite{Nagura}. The point $\psi=\infty$ is a special singular point that will be called the large complex structure limit, i.e. the monodromy of the solutions about $\infty$ is maximally unipotent. In a small neighborhood of it, the solution space of this Picard-Fuchs equation has a canonical basis that will be called the canonical periods. In order to construct the limit MHS at $\psi= \infty$, we will need to find a rational basis of the homology group $H_n(\mathscr{X}_\psi,\mathbb{Q})$, and compute the periods of the holomorphic $n$-form with respect to this basis, which will be called the rational periods \cite{KimYang,Schmid}. The linear transformation between the rational periods and the canonical periods is called the period matrix, which is a prerequisite to the construction of the limit MHS. However in general this period matrix is very difficult to compute, and in this paper, we will try a numerical method to evaluate the period matrix using Mathematica programs. Our numerical results have shown the occurrence of higher zeta values in the period matrix, which provides numerical evidence to the Gamma conjecture \cite{Galkin}.

The outline of this paper is as follows. In Section \ref{sec:fermatpencilcanonicalperiods}, we introduce the Fermat pencil of Calabi-Yau $n$-folds, and we will study its Picard-Fuchs equation by Frobenius method. We will prove a linear independence property about the solutions to this Picard-Fuchs equation. In Section \ref{sec:variationofHodgestructure}, we will discuss the variations of Hodge structures for the Fermat pencil, and study its limit MHS at the large complex structure limit, which is determined by a period matrix. In Section \ref{sec:numericalperiodmatrix}, we will use a numerical method to evaluate the period matrix for the cases where $n=4,5,6,7,8,9,10,11,12$ using Mathematica programs. Our numerical results have shown the appearance of higher zeta values in a highly interesting way. In Section \ref{sec:generalizationNdimensional}, we will generalize our numerical results in Section \ref{sec:numericalperiodmatrix}, and furthermore, we will give a motivic explanation to the appearance of higher zeta values in the period matrix.

\section{The Fermat pencil of Calabi-Yau \texorpdfstring{$n$}{n}-folds} \label{sec:fermatpencilcanonicalperiods}

In this section, we will introduce the Fermat pencil of Calabi-Yau $n$-folds. We will show how to solve its Picard-Fuchs equation by Frobenius method, which yields a canonical basis for the solution space. We will also prove a linear independence property for this canonical basis. 

\subsection{The Fermat pencil}

The Fermat pencil of Calabi-Yau $n$-folds is a one-parameter family of $n$-dimensional hypersurfaces in the projective space $\mathbb{P}^{n+1}$ defined by
\begin{equation} \label{eq:nplus2degreepolynomial}
\mathscr{X}_\psi :~\{~\sum_{i=0}^{n+1} X^{n+2}_i -(n+2)\, \psi\, \prod_{i=0}^{n+1} X_i =0~ \} \subset \mathbb{P}^{n+1},
\end{equation}
where $(X_0,X_1,\cdots,X_{n+1})$ forms the projective coordinate of $\mathbb{P}^{n+1}$. For convenience, the degree-$(n+2)$ polynomial in the formula \ref{eq:nplus2degreepolynomial} will be denoted by $f_\psi$. In a more formal language, the polynomial equation in the formula \ref{eq:nplus2degreepolynomial} defines a rational fibration
\begin{equation}
\pi: \mathscr{X} \rightarrow \mathbb{P}^1,
\end{equation}
whose singular fibers are over the points
\begin{equation}
\{\psi^{n+2}=1 \} \cup \{\psi= \infty \}.
\end{equation}
The adjunction formula tells us that $\mathscr{X}_\psi$ has vanishing first Chern class, so it is a Calabi-Yau manifold if it is smooth. Moreover, there exists a projective linear transformation
\begin{equation} \label{eq:projectivetransformationdescent}
X_0 \mapsto \zeta_{n+2} \,X_0, ~X_i \mapsto X_i,~i=1,\cdots,n+1;~\zeta_{n+2}=\exp 2 \pi i/(n+2)
\end{equation}
that induces an isomorphism between $\mathscr{X}_\psi$ and $\mathscr{X}_{\zeta_{n+2} \psi}$. Hence the `correct' parameter of the moduli is in fact $\psi^{n+2}$. For later convenience, let us instead define $\varphi$ by
\begin{equation}
\varphi:=\frac{1}{\psi^{n+2}}.
\end{equation}
The Fermat pencil in the formula \ref{eq:nplus2degreepolynomial} descends to a family over $\mathbb{P}^1$ which has $\varphi$ as its coordinate. By abuse of notation, we will denote this descent also by 
\begin{equation} \label{eq:oneparafamilyFermat}
\pi: \mathscr{X} \rightarrow \mathbb{P}^1,
\end{equation}
the singular fibers of which are over the points $\{ 0,1, \infty \}$.

\subsection{The Picard-Fuchs equation} \label{sec:picardfuchscanonicalperiods}

Given a smooth fiber $\mathscr{X}_\psi$, there is a canonical way to construct a holomorphic $n$-form on it \cite{MarkGross,Nagura}. On $\mathbb{P}^{n+1}$, there is a meromorphic  $(n+1)$-form $\Theta_\psi$ defined by
\begin{equation}
\Theta_\psi:=\sum_{i=0}^{n+1} \frac{(-1)^i\,\psi}{f_\psi}\left( X_i \,dX_0 \wedge \cdots \wedge \widehat{dX_i} \wedge \cdots \wedge d X_{n+1}\right),
\end{equation}
which is well-defined on the open subvariety $\mathbb{P}^{n+1}-\mathscr{X}_\psi$. Since $\Theta_\psi$ is a meromorphic $(n+1)$-form on $\mathbb{P}^{n+1}$, it is automatically closed. Therefore its residue along the hypersurface $\mathscr{X}_\psi$ is well-defined, which is by definition the holomorphic $n$-form $\Omega_\psi$ on $\mathscr{X}_\psi$
\begin{equation}
\Omega_\psi:= \text{Res}_{\mathscr{X}_\psi}(\Theta_\psi).
\end{equation}
The computation in the open affine subvarieties of $\mathscr{X}_\psi$ have shown that $\Omega_\psi$ is in fact nowhere vanishing \cite{MarkGross,Nagura}. Moreover, the meromorphic $(n+1)$-form $\Theta_\psi$ is equivariant under the projective transformation \ref{eq:projectivetransformationdescent}, hence its residue defines a nowhere vanishing holomorphic $n$-form $\Omega_\varphi$ on $\mathscr{X}_\varphi$. It is very important that $\Omega_\varphi$ has a logorithmic pole at the normal crossing divisors over the large complex structure limit $\varphi=0$ \cite{KimYang}.

For simplicity, the underlying differential manifold of a smooth fiber of the family \ref{eq:oneparafamilyFermat} will be denoted by $X$. A period of $\Omega_\varphi$ is by definition an integral of the form
\begin{equation}
\int_C \Omega_\varphi,~C \in H_n(X,\mathbb{C}),
\end{equation}
which is generally very difficult to compute directly. Fortunately, there is another much easier method to find the periods of $\Omega_\varphi$ by solving its Picard-Fuchs equation. It is well-known that the $n$-form $\Omega_\varphi$ satisfies the following Picard-Fuchs equation \cite{Nagura}
\begin{equation} \label{eq:picardfuchsfermatnfold}
\left( \vartheta^{n+1}-\varphi\, \prod_{k=1}^{n+1}\left( \vartheta+ \frac{k}{n+2} \right) \right)  \Omega_\varphi =0,~\vartheta =\varphi \frac{d}{d \varphi},
\end{equation}
which immediately implies that the periods of $\Omega_\varphi$ also satisfy this ODE. For simplicity, we will denote the Picard-Fuchs operator in the formula \ref{eq:picardfuchsfermatnfold} by
\begin{equation}
\mathcal{D}_n:=\vartheta^{n+1}-\varphi\, \prod_{k=1}^{n+1}\left( \vartheta+ \frac{k}{n+2} \right),~\vartheta =\varphi \frac{d}{d \varphi}.
\end{equation}

\subsection{The Frobenius method} \label{sec:frobeniusmethodandmonodromy}

The solutions to the Picard-Fuchs operator $\mathcal{D}_{n}$ can be found by the Frobenius method, which we now explain. First let $\varpi$ be formally defined by 
\begin{equation}
\varpi:=\varphi^{\epsilon} \, \sum_{k=0}^{\infty} a_k(\epsilon)\, \varphi^k,
\end{equation} 
where $\epsilon$ is a formal index and $a_k(\epsilon)$ is a number. For simplicity, we will denote the power series in $\varpi$ by $h(\epsilon,\varphi)$, i.e.
\begin{equation}
h(\epsilon,\varphi):=\sum_{k=0}^{\infty} a_k(\epsilon)\, \varphi^k.
\end{equation}
Now we plug $\varpi$ into the Picard-Fuchs equation
\begin{equation} \label{eq:solveingpicardfuchs}
\mathcal{D}_n\,\varpi=0,
\end{equation}
and we obtain equations of $\epsilon$ and $a_k(\epsilon)$. To the lowest order, we have
\begin{equation}
\epsilon^{n+1}=0,
\end{equation}
so $\varpi$ admits a series expansion in $\epsilon$ which terminates at $\epsilon^{n+1}$
\begin{equation}
\varpi=\sum_{i=0}^{n}\varpi_i \,\epsilon^i,~\text{with}~\varpi_i:=\left. \frac{\partial^i \varpi}{\partial \epsilon^i} \right|_{\epsilon=0}.
\end{equation}
Let us formally define $h_i(\varphi)$ by
\begin{equation}
h_i(\varphi):=\left. \frac{\partial^i h(\epsilon,\varphi)}{ \partial \epsilon^i} \right|_{\epsilon=0},
\end{equation}
which is a power series in $\varphi$, and we deduce that $\varpi_i$ is of the form
\begin{equation} \label{eq:formsofvpi}
\varpi_i=\left. \frac{\partial^i(\varphi^{\epsilon}\,h(\epsilon,\varphi))}{\partial \epsilon^i} \right|_{\epsilon=0}=\sum_{k=0}^{i} \binom{i}{k} h_k(\varphi)\,\log^{i-k} \varphi.
\end{equation}
The coefficients of the power series $h_i(\varphi)$ are determined by the recursion equations yielded by the Picard-Fuchs equation \ref{eq:solveingpicardfuchs}, which are unique if we impose the following boundary conditions
\begin{equation} \label{eq:boundaryconditionscanonicalperiods}
h_0(0)=1,~h_1(0)=\cdots=h_n(0)=0.
\end{equation}
Since the Picard-Fuchs operator $\mathcal{D}_n$ lies in the (non-commutative) ring $\mathbb{Q}[\varphi,\vartheta]$, the coefficients of the recursion equations lie in $\mathbb{Q}$, from which we obtain
\begin{equation}
h_i(\varphi) \in \mathbb{Q}[[\varphi]],~i=0,1,\cdots,n.
\end{equation}
The Picard-Fuchs operator $\mathcal{D}_n$ has three regular singularities $\{0,~1,~\infty \}$, therefore the power series expansion of $h_i(\varphi)$ converges on the unit disc 
\begin{equation}
\Delta:=\{ |\varphi| <1 \}.
\end{equation}
By analytic continuation, the solution $\varpi_i$ extends to a multi-valued holomorphic function on $\mathbb{C}-\{0,1 \}$, which will be called the canonical period of $\mathcal{D}_n$. The canonical period vector is by definition the column vector
\begin{equation}
\varpi:=\left( \varpi_0,\varpi_1,\cdots,\varpi_n \right)^\top.
\end{equation}
The normalized canonical period vector $\varpi_R$ is defined to be the column vector 
\begin{equation}
\varpi_R:=\left( \varpi_0,\frac{1}{2 \pi i}\varpi_1,\cdots,\frac{1}{(2 \pi i)^n}\varpi_n \right)^\top,~\varpi_{R,j}=\frac{1}{(2 \pi i)^j}\varpi_j .
\end{equation}
The monodromy of the (normalized) canonical periods about $\varphi=0$ is induced by the analytic continuation $\log \varphi \rightarrow \log \varphi+ 2\pi i$, under which $\varpi_R$ transforms in the way
\begin{equation} \label{eq:rescalemonodromyperiods}
T_0:\varpi_{R,i}(\varphi) \mapsto \sum_{k=0}^i \binom{i}{k} \varpi_{R,k}(\varphi).
\end{equation}
More explicitly, $T_0$ is the matrix
\begin{equation} \label{eq:monodromymatrixT0varpir}
T_0=
\begin{pmatrix}
1, & 0, & 0, & 0,  &  \cdots &0, \\
1, & 1, & 0,  &  0, &  \cdots & 0, \\
1, & 2, & 1, & 0,   &   \cdots &0,\\
\vdots & \vdots &\vdots & \vdots &  \ddots & \vdots \\
1, & \binom{n}{1}, & \binom{n}{2}, & \binom{n}{3},  & \cdots & \binom{n}{n}, \\
\end{pmatrix},
\end{equation}
and the monodromy action can be written as
\begin{equation}
\varpi_R \rightarrow T_0 \,\varpi_R.
\end{equation}
The matrix $T_0$ satisfies the equation
\begin{equation}
(T_0-\text{Id})^{n+1}=0,
\end{equation}
therefore the monodromy around 0 is maximally unipotent, and we will call $\varphi=0$ the large complex structure limit.

\subsection{The Wronskian of canonical periods} \label{sec:linearlyindependence}

Given a point $\varphi \in \mathbb{C}-\{ 0,1 \}$, we will prove the following $n+1$ vectors
\begin{equation} \label{eq:linearindependencephi0}
\varpi(\varphi), \vartheta \varpi(\varphi),\cdots, \vartheta^n \varpi(\varphi)
\end{equation}
are linearly independent at $\varphi$, a property that will be important in later sections. First, we will need to introduce the Wronskian of canonical periods
\begin{equation}
U_{ij}(\varphi):=\frac{1}{i!} \,\vartheta^i \, \varpi_j(\varphi),
\end{equation}
where the indices $i,j$ run from 0 to $n$. The determinant of the Wronskian $U(\varphi)$, denoted by $\text{det}\left( U(\varphi) \right)$, is invariant under the monodromy around $0$, hence it must be a single-valued holomorphic function on the unit disc $\Delta$. Therefore when we compute $\text{det}\left( U(\varphi) \right)$, we can ignore the terms of $U_{ij}(\varphi)$ that depend on $\log \varphi$. From the boundary condition \ref{eq:boundaryconditionscanonicalperiods}, we find
\begin{equation} \label{eq:determinantboundarycondition}
\lim_{\varphi \rightarrow 0} \text{det}\left( U(\varphi) \right)=1.
\end{equation}
From elementary properties of the determinant of a matrix and the Picard-Fuchs equation \ref{eq:picardfuchsfermatnfold}, $\text{det}\left( U(\varphi) \right)$ satisfies a first order differential equation
\begin{equation}
\left(1- \varphi \right) \vartheta \left( \text{det}\left( U(\varphi) \right) \right)=\frac{1}{2}\,(n+1)\, \varphi \,\text{det}\left( U(\varphi) \right).
\end{equation}
Together with the boundary condition \ref{eq:determinantboundarycondition}, we immediately get
\begin{equation}
\text{det}\left( U(\varphi) \right)=\left(1- \varphi \right)^{-\frac{n+1}{2}},
\end{equation}
which implies the canonical periods $\{\varpi_i \}_{i=0}^n$ are linearly independent and form a basis for the solution space of $\mathcal{D}_n$. If $\varphi \in \mathbb{C}-\{0,1\}$, we have
\begin{equation}
\text{det}\left( U(\varphi) \right) \neq 0,
\end{equation}
hence the vectors in the formula \ref{eq:linearindependencephi0} must be linearly independent.

\section{Variation of Hodge structures} \label{sec:variationofHodgestructure}

In this section, we will discuss the variation of Hodge structures for the Fermat pencil of Calabi-Yau $n$-folds, and we will study the limit MHS on $H^n(X,\mathbb{Q})$ at the large complex structure limit $\varphi=0$.

\subsection{The split of pure Hodge structures}

Recall that $X$ denotes the underlying differential manifold of an arbitrary smooth fiber of the family \ref{eq:oneparafamilyFermat}. Let $H^a_n (X, \mathbb{Q})$ be the subspace of $H_n(X,\mathbb{Q})$ defined by the property
\begin{equation}
A \in H^a_n (X, \mathbb{Q}) \iff \int_A \Omega_\varphi \equiv 0.
\end{equation}
The Poincar\'e duality induces a non-degenerate bilinear form on $H_n(X,\mathbb{Q})$, and let $H^t_n (X, \mathbb{Q})$ be the complement of $H^a_n (X, \mathbb{Q})$ with respect to it, thus we have
\begin{equation} \label{eq:homologydecomposition}
H_n(X, \mathbb{Q})=H^a_n (X, \mathbb{Q}) \oplus H^t_n (X, \mathbb{Q}).
\end{equation}
Let the dual of $H^t_n (X, \mathbb{Q})$ (resp. $H^a_n (X, \mathbb{Q})$) be denoted by $H^{n,t} (X, \mathbb{Q})$ (resp.  $H^{n,a} (X, \mathbb{Q})$), then the cohomology group $H^n(X,\mathbb{Q})$ splits into the direct sum
\begin{equation} \label{eq:cohomologydecomposition}
H^n (X, \mathbb{Q})=H^{n,a} (X, \mathbb{Q}) \oplus H^{n,t} (X, \mathbb{Q}).
\end{equation}
From the definition of $H^a_n (X, \mathbb{Q})$, the monodromy action around $\varphi=0$ preserves the decomposition of $H_n(X,\mathbb{Q})$ (resp. $H^n(X,\mathbb{Q})$) in the formula \ref{eq:homologydecomposition} (resp. \ref{eq:cohomologydecomposition}). Let us denote the monodromy action on $H^t_n(X,\mathbb{Q})$ by $T$, while the operator $N$ is defined by
\begin{equation}
N:=T-\text{Id},
\end{equation}
furthermore, $T$ and $N$ extend to maps on $H^t_n (X, \mathbb{C}):=H^t_n (X, \mathbb{Q})  \otimes \mathbb{C}$.

The (nontrivial) period of $\Omega_\varphi$ is given by its integration over the cycles in $H^t_n (X, \mathbb{C}) $. From Section \ref{sec:fermatpencilcanonicalperiods}, the $n$-form $\Omega_\varphi$ has $n+1$ linearly independent canonical periods $\{ \varpi_i \}_{i=0}^n$, hence the dimension of $H^t_n (X, \mathbb{C}) $ is $n+1$. Moreover, there exists a basis $\{C_i \}_{i=0}^n$ of the complex vector space $H^t_n (X, \mathbb{C})$ such that 
\begin{equation}
\varpi_i(\varphi)=\int_{C_i} \Omega_\varphi,
\end{equation}
from which we have
\begin{equation}
\varpi_{R,j}(\varphi)=\int_{C_{R,j}} \Omega_\varphi,~C_{R,j}=\frac{1}{(2 \pi i)^j} C_{R,j}.
\end{equation}
The monodromy action on the basis $C_{R,i}$ is determined by the monodromy action on $\varpi_{R,i}$ in the formula  \ref{eq:rescalemonodromyperiods}
\begin{equation} \label{eq:homologyrationalmonodromy}
T:C_{R,i} \mapsto \sum_{k=0}^i \binom{i}{k} C_{R,k},
\end{equation}
therefore the operator $N$ satisfies
\begin{equation} \label{eq:nilpotentofN}
N^{n+1}=0.
\end{equation}

 Let the dual of the basis $\{C_i \}_{i=0}^n$ be denoted by $\{\gamma_i \}_{i=0}^n$, i.e they satisfy the pairing relation
\begin{equation}
\gamma_i(C_j)=\delta_{ij},
\end{equation}
and $\{\gamma_i \}_{i=0}^n$ forms a basis of $H^{n,t} (X, \mathbb{C}):=H^{n,t} (X, \mathbb{Q}) \otimes \mathbb{C}$.  The $n$-form $\Omega_\varphi$ admits an expansion of the form
\begin{equation}
\Omega_\varphi =\sum_{i=0}^{n} \gamma_i \,\int_{C_i} \Omega_\varphi =\sum_{i=0}^n \gamma_i \,\varpi_i(\varphi),
\end{equation}
and similarly, the form $\vartheta^k \Omega(\varphi)$ admits an expansion
\begin{equation}
\vartheta^k \Omega(\varphi)=\sum_{i=0}^n \gamma_i \int_{C_i} \vartheta^k \Omega(\varphi)=\sum_{i=0}^n \gamma_i\, \vartheta^k\varpi_i(\varphi).
\end{equation}
While the result from Section \ref{sec:linearlyindependence} implies that for every $\varphi \in \mathbb{C} -\{ 0, 1\}$, the forms
\begin{equation} \label{eq:linearindependencethetaomega}
\Omega_\varphi,~ \vartheta \Omega_\varphi, ~\cdots, ~\vartheta^n \Omega_\varphi
\end{equation}
are linearly independent, therefore they form a basis of $H^{n,t}(X,\mathbb{C})$. 

From Hodge theory, there exists a Hodge decomposition 
\begin{equation}
H^n(X, \mathbb{Q}) \otimes \mathbb{C}=H^{n,0}(\mathscr{X}_\varphi) \oplus H^{n-1,1}(\mathscr{X}_\varphi) \oplus \cdots \oplus H^{1,n-1}(\mathscr{X}_\varphi) \oplus H^{0,n}(\mathscr{X}_\varphi).
\end{equation}
It defines a weight-$n$ pure Hodge structure $\left( H^n(X, \mathbb{Q}), F^p_\varphi \right)$ with the Hodge filtration $F_\varphi^p$ 
\begin{equation}
F^p_\varphi= \oplus_{k \geq p}H^{k,n-k}(\mathscr{X}_\varphi),
\end{equation}
which varies holomorphically with respect to $\varphi$. From Griffiths transversality, we have 
\begin{equation}
\vartheta^i \Omega_\varphi \in F^{n-i}_{\varphi},~i=0,1,\cdots,n.
\end{equation}
Together with the linear independence of $\vartheta^i \Omega_\varphi$, it shows there is a pure Hodge structure on $H^{n,t}(X,\mathbb{Q})$ with Hodge filtration given by \cite{PetersSteenbrink}
\begin{equation}
F^{p,t}_\varphi :=\oplus_{i=0}^{n-p}~ \mathbb{C} ~\vartheta^i \Omega(\varphi).
\end{equation}
Therefore the pure Hodge structure on $H^n(X,\mathbb{Q})$ splits into the direct sum
\begin{equation} \label{eq:decompositionpureHodgestructure}
\left( H^n(X,\mathbb{Q}),F^p_\varphi \right) =\left(H^{n,a} (X, \mathbb{Q}),F_\varphi^{p,a} \right) \oplus \left( H^{n,t} (X, \mathbb{Q}),F_\varphi^{p,t}\right),
\end{equation}
where the pure Hodge structure $\left(H^{n,a} (X, \mathbb{Q}),F_\varphi^{p,a} \right)$ is induced by $\left( H^n (X, \mathbb{Q}), F^p_\varphi\right)$. Moreover, this decomposition is preserved by the monodromy action about $\varphi=0$.

\subsection{The limit MHS} \label{sec:limitmhs}

Now we will need the theory of limit MHS \cite{Schmid, SteenbrinkLMHS}. When $\varphi$ approaches 0, the pure Hodge structure $\left( H^n(X,\mathbb{Q}),F^p_\varphi \right)$ has a limit that is a MHS
\begin{equation} \label{eq:limitMHSofFermatpencil}
\left(H^n(X,\mathbb{Q}),W_q, F^p_{\text{lim}} \right),
\end{equation}
which is called the limit MHS. Here $W_q$ is an increasing weight filtration on $H^n(X,\mathbb{Q})$ that is determined by the monodromy action on $H^n(X,\mathbb{Q})$, while $F^p_{\text{lim}}$ is the limit Hodge filtration on $H^n(X,\mathbb{C})$ determined by the `limit' of the Hodge filtration $F^p_\varphi$ \cite{MarkGross,Schmid, SteenbrinkLMHS}. Because the monodromy action preserves the split in the formula \ref{eq:decompositionpureHodgestructure}, we immediately deduce that the limit MHS in the formula \ref{eq:limitMHSofFermatpencil} also splits into the direct sum
\begin{equation}\label{eq:splitsoflmhs}
\left(H^n(X,\mathbb{Q}),W_q, F^p_{\text{lim}} \right)=\left(H^{n,a}(X,\mathbb{Q}),W^a_q, F^{p,a}_{\text{lim}} \right) \oplus \left(H^{n,t}(X,\mathbb{Q}),W^t_q, F^{p,t}_{\text{lim}} \right).
\end{equation}
The limit Hodge filtration $F^{p,t}_{\text{lim}} $ on $H^{n,t}(X,\mathbb{C})$ is given by a regularized limit of the vectors $\{ \vartheta^i\varpi \}_{i=0}^n$, which can be explicitly computed \cite{KimYang,PetersSteenbrink,Schmid}. The weight filtration $W^t_q$ on $H^{n,t}(X,\mathbb{Q})$ is determined by the monodromy action on $H^{n,t}(X,\mathbb{Q})$ \cite{PetersSteenbrink}
\begin{equation}
T^*:H^{n,t}(X,\mathbb{Q}) \rightarrow H^{n,t}(X,\mathbb{Q}).
\end{equation}
Here $T^*$ is the dual of $T$. If we define the operator $N^*$ by 
\begin{equation}
N^*:=T^* -\text{Id},
\end{equation}
it also satisfies
\begin{equation}
(N^*)^{n+1}=0.
\end{equation}
Since the dimension of $H^{n,t}(X,\mathbb{Q})$ is $n+1$, the semi-simplification of $\left(H^{n,t}(X,\mathbb{Q}),W^t_q, F^{p,t}_{\text{lim}} \right)$ must be
\begin{equation} \label{eq:semisimplificationLMHS}
\mathbb{Q}(0) \oplus \mathbb{Q}(-1) \oplus \cdots \oplus \mathbb{Q}(-n),
\end{equation}
thus we deduce that it is a mixed Hodge-Tate structure  \cite{KimYang,PetersSteenbrink}. But in order to construct this MHS explicitly, we will need to know the periods of $\Omega_\varphi$ with respect to a rational basis of $H^{n,t}(X,\mathbb{Q})$. Or equivalently, the linear transformation between $\{C_{R,i} \}_{i=0}^n$ (a basis of $H_n^t(X,\mathbb{C})$) and a rational basis of $H_n^t(X,\mathbb{Q})$, while the matrix of this linear transformation will be called the period matrix.

\subsection{The period matrix} \label{sec:theperiodsmatrixanalysis}

Let us first construct a special basis of the rational vector space $ H_n^t(X,\mathbb{Q})$, which can simplify the period matrix significantly. Since the operator $N$ satisfies the formula \ref{eq:nilpotentofN}, the kernel $\text{ker}\,N$ ($=\text{Im}\,N^n$) is a one dimensional subspace of $ H_n^t(X,\mathbb{Q})$ spanned by an element $A_0$. Moreover, there exists an element $A_1\in \text{Im}\,N^{n-1} $ such that
\begin{equation}
N(A_1) =A_0.
\end{equation}
An easy induction shows there exist elements $\{ A_i \}_{i=2}^n$ that satisfy
\begin{equation} \label{eq:conditionmonodromyAi}
N(A_i)= \sum_{k=0}^{i-1} \binom{i}{k} A_k,
\end{equation}
and furthermore, $\{A_i\}_{i=0}^n$ forms a basis of $ H_n^t(X,\mathbb{Q})$. The choice of $A_i$ is not unique, and we have the freedom to redefine $A_n$ by
\begin{equation} \label{eq:rationaltransformationAi}
A'_n=A_n+\sum_{i=0}^{n-1} l_i\, A_i,~l_i\in \mathbb{Q},
\end{equation}
which will yield a new basis $\{ A'_i \}_{i=0}^n$ that also satisfies the formula \ref{eq:conditionmonodromyAi}.  Since the dimension of $\text{ker}\,N$ is one, the element $C_{R,0}$ must be a nonzero multiple of $A_0$, hence after a rescaling of the $n$-form $\Omega_\varphi$ we can assume 
\begin{equation}
C_{R,0}=A_0.
\end{equation}
From the formula \ref{eq:homologyrationalmonodromy}, we now must have
\begin{equation}
N(A_1-C_{R_1})=0, 
\end{equation}
which implies
\begin{equation}
A_1=C_{R,1}+a_{1,0}\,C_{R,0},~a_{1,0} \in \mathbb{C}.
\end{equation}
A simple induction shows
\begin{equation} \label{eq:rationalcanonicaltransformations}
A_i=C_{R,i}+\sum_{k=0}^{i-1} a_{i,k}\, C_{R,k},~i=2,\cdots,n, ~a_{i,k} \in \mathbb{C}.
\end{equation}
A different choice of the basis $\{A_i \}_{i=0}^n$ induced by the formula \ref{eq:rationaltransformationAi} will change the value of $a_{n,i}$ to
\begin{equation}
a_{n,i} \rightarrow a_{n,i}+r_{n,i},~r_{n,i} \in \mathbb{Q}.
\end{equation}
While at the same time, the values of $a_{k,i},k<n$ will change to
\begin{equation}
a_{k,i} \rightarrow a_{k,i}+r_{k,i}, r_{k,i} \in \mathbb{Q},
\end{equation}
where $\{ r_{k,i} \}$ is determined by $\{ r_{n,i} \}$. In particular, we deduce that the coset of $a_{i,j}$ in $\mathbb{C}/\mathbb{Q}$ does not depend on the choice of the rational basis $\{A_i \}_{i=0}^n$.

The rational period $\Pi_i$ of $\Omega_\varphi$ is defined by the integration
\begin{equation} \label{eq:defnrationalperiods}
\Pi_i:=\int_{A_i} \,\Omega_\varphi,
\end{equation}
and let us define the rational period vector $\Pi$ to be the column vector
\begin{equation}
\Pi:=\left(\Pi_0,\cdots,\Pi_n \right)^\top.
\end{equation}
From the formula \ref{eq:rationalcanonicaltransformations}, the linear transformation between the rational period $\Pi_i$ and the canonical period $\varpi_i$ is given by
\begin{equation}
\Pi_i=\varpi_{R,i}+\sum_{k=0}^{i-1} a_{i,k}\,\varpi_{R,k}.
\end{equation}
It is more convenient to write the period matrix $P$ as
\begin{equation} \label{eq:periodmatrixgeneral}
P=
\begin{pmatrix}
1, & 0, & 0,   & \cdots &0, \\
a_{1,0}, & 1, &  0, & \cdots & 0, \\
a_{2,0}, & a_{2,1}, & 1,  & \cdots &0,\\
\vdots & \vdots & \vdots & \ddots & \vdots \\
a_{n,0}, & a_{n,1}, & a_{n,2}, & \cdots & 1, \\
\end{pmatrix},
\end{equation}
and we have
\begin{equation}
\Pi=P \cdot  \varpi_{R}.
\end{equation}
The monodromy matrix of $\Pi$ is given by
\begin{equation}
P \cdot T_0 \cdot P^{-1}.
\end{equation}
But from the definition of $\Pi$, its monodromy matrix must lie in $\text{GL}(n+1,\mathbb{Q})$, and this imposes strong restrictions on the value of $a_{i,j}$.
\begin{lemma}
After a change of $a_{n,i}$ to $a_{n,i}+r_{n,i}$ for some rational number $r_{n,i}$, we have
\begin{equation} \label{eq:conditionsonaijvalues}
a_{i,j}=\binom{i}{j} \, a_{i-j,0},~\forall i>j.
\end{equation}
\end{lemma}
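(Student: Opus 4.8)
The plan is to turn the rationality of the monodromy matrix into an explicit commutation relation and then solve that relation using the structure of the regular unipotent $T_0$. Since $\{A_i\}_{i=0}^n$ is a rational basis and, by construction \ref{eq:conditionmonodromyAi}, the monodromy $T$ acts on it by $T(A_i)=\sum_{k=0}^i\binom{i}{k}A_k$, the matrix of $T$ in this basis is exactly the integral matrix $T_0$ of \ref{eq:monodromymatrixT0varpir}. Read instead through $\Pi=P\,\varpi_R$, the same monodromy is $P\,T_0\,P^{-1}$. Equating the two descriptions gives the key relation $P\,T_0=T_0\,P$, i.e. $P$ lies in the centralizer of $T_0$; this is the precise form of the ``restrictions'' that rationality imposes, and the rest is linear algebra.

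To solve $P\,T_0=T_0\,P$ I would pass to the logarithm. Set $S:=\log T_0$; since $T_0-\text{Id}$ is nilpotent, $S$ is a polynomial in $T_0$ (and $T_0=\exp S$), so commuting with $T_0$ is the same as commuting with $S$. One checks directly that $S$ is the subdiagonal nilpotent $S_{ij}=i\,\delta_{i,j+1}$, with powers $(S^m)_{ij}=\frac{i!}{j!}\,\delta_{i,j+m}$, and that $\exp(S)=T_0$ recovers $(T_0)_{ij}=\binom{i}{j}$. Because the first subdiagonal $1,2,\dots,n$ of $S$ has no zero entry, $S$ is a regular nilpotent (a single Jordan block), so its centralizer is precisely the polynomial algebra $\mathbb{C}[S]=\mathrm{span}\{\text{Id},S,\dots,S^n\}$. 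A lower-triangular unipotent $P$ in this algebra must therefore have the form $P=\text{Id}+\sum_{m=1}^n c_m S^m$ with $c_m\in\mathbb{C}$. Reading off entries, $a_{i,j}=P_{ij}=c_{i-j}\frac{i!}{j!}$ for $i>j$, and since $a_{i-j,0}=c_{i-j}(i-j)!$ this is exactly $a_{i,j}=\binom{i}{j}a_{i-j,0}$, with the $n$ numbers $a_{i,0}=c_i\,i!$ surviving as the free parameters.

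The remaining point, which I expect to demand the most care, is the interplay between the transcendental entries $a_{i,j}$ and the \emph{rational} freedom in the basis. A priori, rationality forces only $P\,T_0\,P^{-1}\in\mathrm{GL}(n+1,\mathbb{Q})$ rather than $=T_0$ on the nose, so the commutation argument yields the binomial relations only modulo $\mathbb{Q}$; this is consistent with the earlier observation that each $a_{i,j}$ is well-defined merely as a coset in $\mathbb{C}/\mathbb{Q}$. To upgrade the congruences to honest equalities I would invoke the admissible change of basis \ref{eq:rationaltransformationAi}, which shifts $a_{n,i}$ by rationals $r_{n,i}$ and correspondingly shifts the lower $a_{k,i}$ by determined rationals, leaving every coset unchanged; choosing the $r_{n,i}$ to cancel the rational discrepancies produces $a_{i,j}=\binom{i}{j}a_{i-j,0}$ exactly. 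The essential obstacle is thus not the centralizer computation but the bookkeeping that isolates the genuinely transcendental content (the cosets $a_{i,0}\bmod\mathbb{Q}$) from the rational ambiguity, and the check that the permitted rational redefinition suffices to normalize the whole matrix. A convenient cross-check, which I would carry out in parallel, is to substitute \ref{eq:rationalcanonicaltransformations} into \ref{eq:conditionmonodromyAi} and solve the resulting triangular recursion for the $a_{i,j}$ directly: this both reproduces the binomial pattern and confirms that exactly one free coset survives in each row.
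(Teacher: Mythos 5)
Your proposal is correct and takes essentially the same route as the paper: the paper's one-line proof is exactly the comparison of the monodromy of $\Pi$ about $\varphi=0$, computed as $P\cdot T_0\cdot P^{-1}$, with the monodromy in the rational basis $\{A_i\}$, which by the construction in \ref{eq:conditionmonodromyAi} is precisely $T_0$, so that $P$ must centralize $T_0$ and the binomial pattern follows. Your passage to $S=\log T_0$ and the centralizer of a regular nilpotent is simply a cleaner organization of the ``direct computation'' the paper leaves implicit (and in fact, since the $A$-basis monodromy is $T_0$ on the nose, your cautious mod-$\mathbb{Q}$ bookkeeping is not even needed beyond matching the rational renormalization already built into the lemma's statement).
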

\begin{proof}
This follows from a direct computation of the monodromy matrix $P \cdot T_0 \cdot P^{-1}$ of $\Pi$ from the monodromy matrix $T_0$ of $\varpi_{R,i}$ about $\varphi=0$.
\end{proof}
From now on we will assume that we have adjusted the values of $a_{n,i}$ by a rational number and the formula \ref{eq:conditionsonaijvalues} is satisfied. Once the period matrix $P$ is known, the limit MHS $\left(H^{n,t}(X,\mathbb{Q}),W^t_q, F^{p,t}_{\text{lim}} \right)$ can be constructed explicitly. The readers are referred to the paper \cite{KimYang} for the details of the computations in the case of Calabi-Yau threefolds, which can be generalized to higher dimensional cases. However, in general it is extremely difficult to compute the period matrix $P$, thus in this paper, we will resort to a numerical approach.

\section{The numerical evaluation of the period matrix} \label{sec:numericalperiodmatrix}

In this section, we will use a numerical method to compute the period matrix when the dimension $n$ is $4,5,6,7,8,9,10,11,12$. More precisely, we will compute the first 100 digits of the entries of the period matrix $P$, and show they agree with zeta values. Let us first briefly explain the idea of this numerical method, and illustrate how it works in the case where $n=1$, i.e. the Hesse pencil of elliptic curves.

\subsection{The period matrix of the Hesse pencil}

It is important to notice that the Picard-Fuchs operator $\mathcal{D}_n$ has three regular singularities $\{0,1,\infty \}$. Suppose the monodromy matrix of the period vector $\varpi_R$ around 1 is given by $T_1$, i.e.
\begin{equation}
\varpi_R \mapsto T_1 \,\varpi_R.
\end{equation}
Then the monodromy matrix of the integral period vector $\Pi$ is given by $P \cdot T_1 \cdot P^{-1}$, which is a matrix with entries being rational numbers. We do not know how to compute $T_1$, because we do not know how to analytically extend $\varpi_{R,i}$ to a small neighborhood of $\varphi=1$ explicitly. But the good news is that $T_1$ can be numerically evaluated. The expansion of $\varpi_{R,i}$ obtained in the Section \ref{sec:frobeniusmethodandmonodromy} converges in the unit disc $|\varphi|<1$. Use Mathematica, we can compute the expansion of $h_i(\varphi)$ to a high order. Then we choose a point, say $\varphi_0=1/10$, and the expansion of $\varpi_{R,i}$ gives us a very precise value of $\varpi_{R,i}(1/10)$ numerically. Let us compute the first 200 digits of $\varpi_{R,i}(1/10)$. Then we can numerically solve the Picard-Fuchs equation on a contour about 1, from which we can numerically compute the monodromy matrix $T_1$ (to 100 digits). We now plug the numerical value of $T_1$ into $P \cdot T_1 \cdot P^{-1}$, and the property that its entries are rational numbers impose strong restrictions on the values of the period matrix $P$.

We now explain how this method works for the toy example where $n=1$. The Picard-Fuchs operator $D_1$ is of the form
\begin{equation}
\mathcal{D}_1:=\vartheta^2-\varphi \,\prod_{k=1}^{2}\left(\vartheta+ \frac{k}{3} \right),
\end{equation}
which is a second order ODE. The Frobenius method yields two canonical solutions $\{\varpi_i \}_{i=0}^1$ of the form shown in Section \ref{sec:frobeniusmethodandmonodromy}. The power series expansions of $\{ h_i \}_{i=0}^1$ can be computed by Mathematica program very efficiently, and we have computed the first 10\,000 terms
\begin{equation} \label{eq:hesseh0h1}
\begin{aligned}
h_0&=1 +\frac{2\, \varphi }{9}+\frac{10 \,\varphi ^2}{81}+\frac{560\, \varphi ^3}{6561}+\frac{3850 \,\varphi ^4}{59049}+\frac{28028 \,\varphi ^5}{531441}+\cdots, \\
h_1 &=\frac{5 \,\varphi }{9}+\frac{19\, \varphi ^2}{54}+\frac{5018 \,\varphi ^3}{19683}+\frac{141355\, \varphi ^4}{708588}+\frac{522109 \, \varphi ^5}{3188646}+ \cdots. \\
\end{aligned}
\end{equation}
We can use their expansions to numerically compute the values of $\{\varpi_{R,0},\varpi_{R,1} \}$ at the following two points
\begin{equation}
\varphi_0=\frac{3}{20}+\frac{1}{10}i, ~\varphi_1=\frac{1}{10}+\frac{1}{10}i.
\end{equation}
Next, we choose a contour around the singular point $\varphi=1$, and numerically solve the Picard-Fuchs equation, from which we obtain the numerical evaluation of the monodromy matrix $T_1$. We have computed the first 100 digits of $T_1$, and here we only list the first 20 digits
\begin{equation}
T_1-\text{Id}=
\begin{pmatrix}
1.5736461865472690010\, i, & 3,  \\
0.82545410681158738285, & -1.5736461865472690010\, i, \\
\end{pmatrix}.
\end{equation}
From Section \ref{sec:theperiodsmatrixanalysis}, there exists a rational basis such that the period matrix $P$ is of the form
\begin{equation}
P=
\begin{pmatrix}
1, & 0, \\
a_{1,0},& 1,\\
\end{pmatrix}.
\end{equation}
The numerical value of the $(1,1)$ entry of $P \cdot (T_1-\text{Id}) \cdot P^{-1} $ is given by
\begin{equation}
1.5736461865472690010\, i-3 \, a_{1,0}.
\end{equation}
Since this entry must be a rational number, after a change of basis we can let $a_{1,0}$ be
\begin{equation}
a_{1,0}=0.5245487288490896670\,i,
\end{equation}
which agrees with the number ${-3 \log 3}/{2 \pi i}$. On the other hand, if we let $a_{1,0}$ be ${-3 \log 3}/{2 \pi i}$, our numerical computations show that up to the first 100 digits, we have
\begin{equation}
 P \cdot T_1 \cdot P^{-1}=
\begin{pmatrix}
1, & 3, \\
0,& 1,\\
\end{pmatrix}.
\end{equation}
The method in this section generalize to the cases where $n \geq 4$.

\subsection{The period matrix of sextic Calabi-Yau fourfold}

We will now look at the case where $n=4$, i.e. the Fermat pencil of sextic Calabi-Yau fourfold. The Picard-Fuchs operator $D_4$ is of the form
\begin{equation}
\mathcal{D}_4:=\vartheta^5-\varphi \,\prod_{k=1}^{5}\left(\vartheta+ \frac{k}{6} \right),
\end{equation}
which is a fifth order ODE. The Frobenius method yields five canonical solutions $\{\varpi_i \}_{i=0}^4$ of the form shown in Section \ref{sec:frobeniusmethodandmonodromy}, i.e. formula \ref{eq:formsofvpi}. The power series expansions of $\{ h_i \}_{i=0}^4$ can be computed by Mathematica program very efficiently, and here we give the first several terms
\begin{equation}
\begin{aligned}
h_0&=1 +\frac{5\, \varphi }{324}+\frac{1925 \,\varphi ^2}{559872}+\frac{14889875\, \varphi ^3}{11019960576}+\cdots, \\
h_1 &=\frac{29\, \varphi }{216}+\frac{222205\, \varphi ^2}{6718464}+\frac{1187693675\, \varphi ^3}{88159684608}+ \cdots, \\
h_2 &=\frac{35\, \varphi }{81}+\frac{63079\, \varphi ^2}{419904}+\frac{107868232835\, \varphi ^3}{1586874322944}+\cdots, \\
h_3 &=-\frac{35 \, \varphi }{27}-\frac{7175 \, \varphi ^2}{139968}+\frac{14522912269\, \varphi ^3}{396718580736}+\cdots, \\
h_4 &=-\frac{5495 \, \varphi ^2}{4374}-\frac{25808545\, \varphi ^3}{45349632}+\cdots. \\
\end{aligned}
\end{equation}
We have computed the first 500 terms of the series expansion of $h_i(\varphi)$ by Mathematica program. Using the numerical method explained above, we have numerically evaluated the monodromy matrix $T_1$ of $\varpi_R$ about 1 to the first 100 digits. For convenience, let us define
\begin{equation}
f_{\log}(n):=-\frac{n \log n}{2 \pi i},~n \in \mathbb{Z}_+.
\end{equation}
We now impose the condition that $P\cdot T_1 \cdot P^{-1}$ is a matrix with entries being rational numbers, and we find that this condition determines the period matrix $P$ uniquely up to an isomorphism. Numerically we have checked that up to 100 digits, the period matrix $P$ is of the form
\begin{equation}
P=P_{\zeta} \cdot P_{\log},
\end{equation}
where the matrix $P_{\zeta}$ is given by
\begin{equation} \label{eq:sexticfrobenius}
P_{\zeta}=
\begin{pmatrix}
1, & 0, & 0, & 0, & 0, \\
0, & 1, & 0, & 0, & 0, \\
0, & 0, & 1, & 0, & 0, \\
 -420\, \zeta(3)/(2 \pi i)^3,  & 0, & 0, & 1, & 0, \\
0, &  -1\,680 \,\zeta(3)/(2 \pi i)^3, & 0, & 0, & 1, \\
\end{pmatrix},
\end{equation}
and the matrix $P_{\log}$ is given by
\begin{equation}
P_{\log}=
\begin{pmatrix}
1, & 0, & 0, & 0, & 0, \\
f_{\log}(6), & 1, & 0, & 0, & 0, \\
f_{\log}(6)^2, &2 f_{\log}(6), & 1, & 0, & 0, \\
f_{\log}(6)^3, & 3 f_{\log}(6)^2, & 3f_{\log}(6), & 1, & 0, \\
f_{\log}(6)^4, & 4 f_{\log}(6)^3, & 6 f_{\log}(6)^2, & 4f_{\log}(6), & 1, \\
\end{pmatrix}.
\end{equation}
We define the normalized canonical period $\varpi_{M,j}$ by
\begin{equation}
\varpi_{M,j}:=\frac{1}{(2 \pi i)^j} \, \sum_{k=0}^{j} \binom{j}{k} h_k(\varphi)\,\log^{j-k} \left( 6^{-6} \varphi \right),
\end{equation}
which is still a solution to the Picard-Fuchs operator $\mathcal{D}_4$. Let the normalized canonical period vector $\varpi_M$ be
\begin{equation}
\varpi_{M}:=\left(\varpi_{M,0},\cdots, \varpi_{M,4} \right)^\top,
\end{equation}
then the linear transformation between $\Pi$ and $\varpi_M$ is given by
\begin{equation}
\Pi=P_{\zeta} \cdot \varpi_M.
\end{equation}
In particular, the periods $\varpi_{M,0}$ and $\varpi_{M,1}$ are given by the integration of the 4-form $\Omega_\varphi$ over two rational cycles of $H_4(X,\mathbb{Q})$, which plays an important role in the study of the mirror symmetry of sextic fourfold.

\subsection{Further examples}

Now we will list our computations for the cases where $n=5,6,7,8,9,10,11,12$.

\subsubsection{Septic Calabi-Yau 5-folds}

For septic Calabi-Yau 5-folds, the same method has shown that the period matrix $P$ is of the form
\begin{equation}
P=P_{\zeta} \cdot P_{\log}.
\end{equation}
If we define the normalized canonical period $\varpi_{M,j}$ by
\begin{equation}
\varpi_{M,j}:=\frac{1}{(2 \pi i)^j} \, \sum_{k=0}^{j} \binom{j}{k} h_k(\varphi)\,\log^{j-k} \left( 7^{-7} \varphi \right),
\end{equation}
then the linear transformation between $\varpi_M$ and $\varpi_R$ is given by
\begin{equation}
\varpi_M=P_{\log} \cdot \varpi_R.
\end{equation}
While the linear transformation between $\Pi$ and $\varpi_M$ is given by
\begin{equation}
\Pi=P_{\zeta} \cdot \varpi_M.
\end{equation}
The entries of the matrix $P_{\zeta}$ satisfy
\begin{equation}
(P_{\zeta})_{i,i}=1;~(P_{\zeta})_{i,j}=0,~\forall j>i;~(P_{\zeta})_{i,j}=\binom{i}{j}(P_{\zeta})_{i-j,0},~\forall j<i;
\end{equation}
where the indices $i$ and $j$ run from 0 to 5. Now let $\tau_{5,3}$ and $\tau_{5,5}$ be
\begin{equation}
\tau_{5,3}=-112\, \zeta(3)/(2 \pi i)^3,~\tau_{5,5}=-3\,360 \, \zeta(5)/(2 \pi i)^5.
\end{equation} 
Our numerical method has shown that
\begin{equation}
(P_{\zeta})_{1,0}=(P_{\zeta})_{2,0}=(P_{\zeta})_{4,0}=0,~(P_{\zeta})_{3,0}=3!\, \tau_{5,3},(P_{\zeta})_{5,0}=5! \,\tau_{5,5}.
\end{equation}

\subsubsection{Octic Calabi-Yau 6-folds}

For octic Calabi-Yau 6-folds, the same method has shown that the period matrix $P$ is also of the form
\begin{equation}
P=P_{\zeta} \cdot P_{\log}.
\end{equation}
If we define the normalized canonical period $\varpi_{M,j}$ by
\begin{equation}
\varpi_{M,j}:=\frac{1}{(2 \pi i)^j} \, \sum_{k=0}^{j} \binom{j}{k} h_k(\varphi)\,\log^{j-k} \left( 8^{-8} \varphi \right),
\end{equation}
then the linear transformation between $\varpi_M$ and $\varpi_R$ is given by
\begin{equation}
\varpi_M=P_{\log} \cdot \varpi_R.
\end{equation}
While the linear transformation between $\Pi$ and $\varpi_M$ is given by
\begin{equation}
\Pi=P_{\zeta} \cdot \varpi_M.
\end{equation}
The entries of the matrix $P_{\zeta}$ satisfy
\begin{equation}
(P_{\zeta})_{i,i}=1;~(P_{\zeta})_{i,j}=0,~\forall j>i;~(P_{\zeta})_{i,j}=\binom{i}{j}(P_{\zeta})_{i-j,0},~\forall j<i;
\end{equation}
where the indices $i$ and $j$ run from 0 to 6.  Now let $\tau_{6,3}$ and $\tau_{6,5}$ be
\begin{equation}
\tau_{6,3}=-168\, \zeta(3)/(2 \pi i)^3,~\tau_{6,5}=-6\,552 \, \zeta(5)/(2 \pi i)^5.
\end{equation}
Our numerical results have shown that
\begin{equation}
(P_{\zeta})_{1,0}=(P_{\zeta})_{2,0}=(P_{\zeta})_{4,0}=0,~(P_{\zeta})_{3,0}=3!\, \tau_{6,3},(P_{\zeta})_{5,0}=5! \,\tau_{6,5},(P_{\zeta})_{6,0}=6!\left( \frac{1}{2!}\, \tau_{6,3}^2\right).
\end{equation}

\subsubsection{Nonic Calabi-Yau 7-folds}

For nonic Calabi-Yau 7-folds, the same numerical method has shown that the period matrix $P$ is also of the form
\begin{equation}
P=P_{\zeta} \cdot P_{\log}.
\end{equation}
If we define the normalized canonical period $\varpi_{M,j}$ by
\begin{equation}
\varpi_{M,j}:=\frac{1}{(2 \pi i)^j} \, \sum_{k=0}^{j} \binom{j}{k} h_k(\varphi)\,\log^{j-k} \left( 9^{-9} \varphi \right),
\end{equation}
then the linear transformation between $\varpi_M$ and $\varpi_R$ is given by
\begin{equation}
\varpi_M=P_{\log} \cdot \varpi_R.
\end{equation}
While the linear transformation between $\Pi$ and $\varpi_M$ is given by
\begin{equation}
\Pi=P_{\zeta} \cdot \varpi_M.
\end{equation}
The entries of the matrix $P_{\zeta}$ satisfy
\begin{equation}
(P_{\zeta})_{i,i}=1;~(P_{\zeta})_{i,j}=0,~\forall j>i;~(P_{\zeta})_{i,j}=\binom{i}{j}(P_{\zeta})_{i-j,0},~\forall j<i;
\end{equation}
where the indices $i$ and $j$ run from 0 to 7. Now let $\tau_{7,3}$, $\tau_{7,5}$ and $\tau_{7,7}$ be
\begin{equation}
\tau_{7,3}=-240\, \zeta(3)/(2 \pi i)^3,~\tau_{7,5}=-11\,808 \, \zeta(5)/(2 \pi i)^5,~\tau_{7,7}=-683\,280 \, \zeta(7)/(2 \pi i)^7.
\end{equation}
Our numerical results have shown that
\begin{equation}
\begin{aligned}
(P_{\zeta})_{1,0}&=(P_{\zeta})_{2,0}=(P_{\zeta})_{4,0}=0,~(P_{\zeta})_{3,0}=3!\, \tau_{7,3},\\
(P_{\zeta})_{5,0}&=5! \,\tau_{7,5}, (P_{\zeta})_{6,0}=6!\left( \frac{1}{2!}\, \tau_{7,3}^2\right), ~(P_{\zeta})_{7,0}=7! \,\tau_{7,7}
\end{aligned}
\end{equation}

\subsubsection{Decic Calabi-Yau 8-folds}

For decic Calabi-Yau 8-folds, the same numerical method has shown that the period matrix $P$ is also of the form
\begin{equation}
P=P_{\zeta} \cdot P_{\log}.
\end{equation}
If we define the normalized canonical period $\varpi_{M,j}$ by
\begin{equation}
\varpi_{M,j}:=\frac{1}{(2 \pi i)^j} \, \sum_{k=0}^{j} \binom{j}{k} h_k(\varphi)\,\log^{j-k} \left( 10^{-10} \varphi \right),
\end{equation}
then the linear transformation between $\varpi_M$ and $\varpi_R$ is given by
\begin{equation}
\varpi_M=P_{\log} \cdot \varpi_R.
\end{equation}
While the linear transformation between $\Pi$ and $\varpi_M$ is given by
\begin{equation}
\Pi=P_{\zeta} \cdot \varpi_M.
\end{equation}
The entries of the matrix $P_{\zeta}$ satisfies
\begin{equation}
(P_{\zeta})_{i,i}=1;~(P_{\zeta})_{i,j}=0,~\forall j>i;~(P_{\zeta})_{i,j}=\binom{i}{j}(P_{\zeta})_{i-j,0},~\forall j<i;
\end{equation}
where the indices $i$ and $j$ run from 0 to 8. Now let $\tau_{8,3}$, $\tau_{8,5}$ and $\tau_{8,7}$ be
\begin{equation}
\tau_{8,3}=-330\, \zeta(3)/(2 \pi i)^3,~\tau_{8,5}=-19\,998 \, \zeta(5)/(2 \pi i)^5,~\tau_{8,7}=-1\,428\,570 \, \zeta(7)/(2 \pi i)^7.
\end{equation}
Our numerical results have shown that
\begin{equation}
\begin{aligned}
(P_{\zeta})_{1,0}&=(P_{\zeta})_{2,0}=(P_{\zeta})_{4,0}=0,~(P_{\zeta})_{3,0}=3!\, \tau_{8,3}, (P_{\zeta})_{5,0}=5! \,\tau_{8,5},\\
 (P_{\zeta})_{6,0}&=6!\left( \frac{1}{2!}\, \tau_{8,3}^2\right), ~(P_{\zeta})_{7,0}=7! \,\tau_{8,7},~(P_{\zeta})_{8,0}=8! \,\tau_{8,3} \tau_{8,5}
\end{aligned}
\end{equation}

\subsubsection{Undenic Calabi-Yau 9-folds}
For undenic Calabi-Yau 9-folds, the same numerical method has shown that the period matrix $P$ is also of the form
\begin{equation}
P=P_{\zeta} \cdot P_{\log}.
\end{equation}
If we define the normalized canonical period $\varpi_{M,j}$ by
\begin{equation}
\varpi_{M,j}:=\frac{1}{(2 \pi i)^j} \, \sum_{k=0}^{j} \binom{j}{k} h_k(\varphi)\,\log^{j-k} \left( 11^{-11} \varphi \right),
\end{equation}
then the linear transformation between $\varpi_M$ and $\varpi_R$ is given by
\begin{equation}
\varpi_M=P_{\log} \cdot \varpi_R.
\end{equation}
While the linear transformation between $\Pi$ and $\varpi_M$ is given by
\begin{equation}
\Pi=P_{\zeta} \cdot \varpi_M.
\end{equation}
The entries of the matrix $P_{\zeta}$ satisfy
\begin{equation}
(P_{\zeta})_{i,i}=1;~(P_{\zeta})_{i,j}=0,~\forall j>i;~(P_{\zeta})_{i,j}=\binom{i}{j}(P_{\zeta})_{i-j,0},~\forall j<i;
\end{equation}
where the indices $i$ and $j$ run from 0 to 9. 
Now let $\tau_{9,3}$, $\tau_{9,5}$, $\tau_{9,7}$ and $\tau_{9,9}$ be
\begin{equation}
\begin{aligned}
\tau_{9,3}&=-440\, \zeta(3)/(2 \pi i)^3,~\tau_{9,5}=-32\,208 \, \zeta(5)/(2 \pi i)^5,\\
\tau_{9,7}&=-2\,783\,880 \, \zeta(7)/(2 \pi i)^7,~\tau_{9,9}=-\frac{785\,982\,560}{3} \, \zeta(9)/(2 \pi i)^9.
\end{aligned}
\end{equation}
Our numerical results have shown that
\begin{equation}
\begin{aligned}
(P_{\zeta})_{1,0}&=(P_{\zeta})_{2,0}=(P_{\zeta})_{4,0}=0,~(P_{\zeta})_{3,0}=3!\, \tau_{9,3},\\
(P_{\zeta})_{5,0}&=5! \,\tau_{9,5}, (P_{\zeta})_{6,0}=6!\left( \frac{1}{2!}\, \tau_{9,3}^2\right), ~(P_{\zeta})_{7,0}=7! \,\tau_{9,7},\\
(P_{\zeta})_{8,0}&=8! \,\tau_{9,3} \tau_{9,5},~(P_{\zeta})_{9,0}=9!\left(\tau_{9,9}+\frac{1}{3!} \tau_{9,3}^3 \right).
\end{aligned}
\end{equation}

\subsubsection{Dudecic Calabi-Yau 10-folds}
For dudecic Calabi-Yau 10-folds, the same numerical method has shown that the period matrix $P$ is also of the form
\begin{equation}
P=P_{\zeta} \cdot P_{\log}.
\end{equation}
If we define the normalized canonical period $\varpi_{M,j}$ by
\begin{equation}
\varpi_{M,j}:=\frac{1}{(2 \pi i)^j} \, \sum_{k=0}^{j} \binom{j}{k} h_k(\varphi)\,\log^{j-k} \left( 12^{-12} \varphi \right),
\end{equation}
then the linear transformation between $\varpi_M$ and $\varpi_R$ is given by
\begin{equation}
\varpi_M=P_{\log} \cdot \varpi_R.
\end{equation}
While the linear transformation between $\Pi$ and $\varpi_M$ is given by
\begin{equation}
\Pi=P_{\zeta} \cdot \varpi_M.
\end{equation}
The entries of the matrix $P_{\zeta}$ satisfy
\begin{equation}
(P_{\zeta})_{i,i}=1;~(P_{\zeta})_{i,j}=0,~\forall j>i;~(P_{\zeta})_{i,j}=\binom{i}{j}(P_{\zeta})_{i-j,0},~\forall j<i;
\end{equation}
where the indices $i$ and $j$ run from 0 to 10. 
Now let $\tau_{10,3}$, $\tau_{10,5}$, $\tau_{10,7}$ and $\tau_{10,9}$ be
\begin{equation}
\begin{aligned}
\tau_{10,3}&=-572\, \zeta(3)/(2 \pi i)^3,~\tau_{10,5}=-49\,764 \, \zeta(5)/(2 \pi i)^5,\\
\tau_{10,7}&=-5\,118\,828 \, \zeta(7)/(2 \pi i)^7,~\tau_{10,9}=-\frac{1\,719\,926\,780}{3} \, \zeta(9)/(2 \pi i)^9.
\end{aligned}
\end{equation}
Our numerical results have shown that
\begin{equation}
\begin{aligned}
(P_{\zeta})_{1,0}&=(P_{\zeta})_{2,0}=(P_{\zeta})_{4,0}=0,~(P_{\zeta})_{3,0}=3!\, \tau_{10,3}, ~(P_{\zeta})_{5,0}=5! \,\tau_{10,5}, \\
 (P_{\zeta})_{6,0}&=6!\left( \frac{1}{2!}\, \tau_{10,3}^2\right), ~(P_{\zeta})_{7,0}=7! \,\tau_{10,7}, ~(P_{\zeta})_{8,0}=8! \,\tau_{10,3} \tau_{10,5},\\
(P_{\zeta})_{9,0}&=9!\left(\tau_{10,9}+\frac{1}{3!} \tau_{10,3}^3 \right),~(P_{\zeta})_{10,0}=10! \left(\frac{1}{2!}\,\tau_{10,5}^2+\tau_{10,3} \tau_{10,7} \right).
\end{aligned}
\end{equation}

\subsubsection{Tredecic Calabi-Yau 11-folds}
For tredecic Calabi-Yau 11-folds, the same numerical method has shown that the period matrix $P$ is also of the form
\begin{equation}
P=P_{\zeta} \cdot P_{\log}.
\end{equation}
If we define the normalized canonical period $\varpi_{M,j}$ by
\begin{equation}
\varpi_{M,j}:=\frac{1}{(2 \pi i)^j} \, \sum_{k=0}^{j} \binom{j}{k} h_k(\varphi)\,\log^{j-k} \left( 13^{-13} \varphi \right),
\end{equation}
then the linear transformation between $\varpi_M$ and $\varpi_R$ is given by
\begin{equation}
\varpi_M=P_{\log} \cdot \varpi_R.
\end{equation}
While the linear transformation between $\Pi$ and $\varpi_M$ is given by
\begin{equation}
\Pi=P_{\zeta} \cdot \varpi_M.
\end{equation}
The entries of the matrix $P_{\zeta}$ satisfy
\begin{equation}
(P_{\zeta})_{i,i}=1;~(P_{\zeta})_{i,j}=0,~\forall j>i;~(P_{\zeta})_{i,j}=\binom{i}{j}(P_{\zeta})_{i-j,0},~\forall j<i;
\end{equation}
where the indices $i$ and $j$ run from 0 to 11. Now let $\tau_{11,3}$, $\tau_{11,5}$, $\tau_{11,7}$, $\tau_{11,9}$ and $\tau_{11,11}$ be
\begin{equation}
\begin{aligned}
\tau_{11,3}&=-728\, \zeta(3)/(2 \pi i)^3,~\tau_{11,5}=-74\,256 \, \zeta(5)/(2 \pi i)^5,\tau_{11,7}=-8\,964\,072 \, \zeta(7)/(2 \pi i)^7,\\
\tau_{11,9}&=-\frac{3\,534\,833\,120}{3} \, \zeta(9)/(2 \pi i)^9,~\tau_{11,11}=-162\,923\,672\,184 \,\zeta(11)/(2 \pi i)^{11}.
\end{aligned}
\end{equation}
Our numerical results have shown that
\begin{equation}
\begin{aligned}
(P_{\zeta})_{1,0}&=(P_{\zeta})_{2,0}=(P_{\zeta})_{4,0}=0,~(P_{\zeta})_{3,0}=3!\, \tau_{11,3},
(P_{\zeta})_{5,0}=5! \,\tau_{11,5}, (P_{\zeta})_{6,0}=6!\left( \frac{1}{2!}\, \tau_{11,3}^2\right), \\
(P_{\zeta})_{7,0}&=7! \,\tau_{11,7},
(P_{\zeta})_{8,0}=8! \,\tau_{11,3} \tau_{11,5},(P_{\zeta})_{9,0}=9!\left(\tau_{11,9}+\frac{1}{3!} \tau_{11,3}^3 \right),\\
(P_{\zeta})_{10,0}&=10! \left(\frac{1}{2!}\,\tau_{11,5}^2+\tau_{11,3} \tau_{11,7} \right),~(P_{\zeta})_{11,0}=11!\left(\tau_{11,11}+\frac{1}{2!}\,\tau_{11,3}^2\tau_{11,5} \right).
\end{aligned}
\end{equation}

\subsubsection{Quattuordecic Calabi-Yau 12-folds}
For quattuordecic Calabi-Yau 12-folds, the same numerical method has shown that the period matrix $P$ is also of the form
\begin{equation}
P=P_{\zeta} \cdot P_{\log}.
\end{equation}
If we define the normalized canonical period $\varpi_{M,j}$ by
\begin{equation}
\varpi_{M,j}:=\frac{1}{(2 \pi i)^j} \, \sum_{k=0}^{j} \binom{j}{k} h_k(\varphi)\,\log^{j-k} \left( 14^{-14} \varphi \right),
\end{equation}
then the linear transformation between $\varpi_M$ and $\varpi_R$ is given by
\begin{equation}
\varpi_M=P_{\log} \cdot \varpi_R.
\end{equation}
While the linear transformation between $\Pi$ and $\varpi_M$ is given by
\begin{equation}
\Pi=P_{\zeta} \cdot \varpi_M.
\end{equation}
The entries of the matrix $P_{\zeta}$ satisfy
\begin{equation}
(P_{\zeta})_{i,i}=1;~(P_{\zeta})_{i,j}=0,~\forall j>i;~(P_{\zeta})_{i,j}=\binom{i}{j}(P_{\zeta})_{i-j,0},~\forall j<i,
\end{equation}
where the indices $i$ and $j$ run from 0 to 12. Now let $\tau_{12,3}$, $\tau_{12,5}$, $\tau_{12,7}$, $\tau_{12,9}$ and $\tau_{12,11}$ be
\begin{equation}
\begin{aligned}
\tau_{12,3}&=-910\, \zeta(3)/(2 \pi i)^3,~\tau_{12,5}=-107\,562 \, \zeta(5)/(2 \pi i)^5,\tau_{12,7}=-15\,059\,070 \, \zeta(7)/(2 \pi i)^7,\\
\tau_{12,9}&=-\frac{6\,887\,015\,590}{3} \, \zeta(9)/(2 \pi i)^9,~\tau_{12,11}=-368\,142\,288\,150 \,\zeta(11)/(2 \pi i)^{11}.
\end{aligned}
\end{equation}
Our numerical results have shown that
\begin{equation}
\begin{aligned}
(P_{\zeta})_{1,0}&=(P_{\zeta})_{2,0}=(P_{\zeta})_{4,0}=0,~(P_{\zeta})_{3,0}=3!\, \tau_{12,3},
(P_{\zeta})_{5,0}=5! \,\tau_{12,5}, \\
 (P_{\zeta})_{6,0}&=6!\left( \frac{1}{2!}\, \tau_{12,3}^2\right),(P_{\zeta})_{7,0}=7! \,\tau_{12,7},
(P_{\zeta})_{8,0}=8! \,\tau_{12,3} \tau_{12,5},\\
(P_{\zeta})_{9,0}&=9!\left(\tau_{12,9}+\frac{1}{3!} \tau_{12,3}^3 \right),
(P_{\zeta})_{10,0}=10! \left(\frac{1}{2!}\,\tau_{12,5}^2+\tau_{12,3} \tau_{12,7} \right), \\
(P_{\zeta})_{11,0}&=11!\left(\tau_{12,11}+\frac{1}{2!}\,\tau_{12,3}^2\tau_{12,5} \right),~(P_{\zeta})_{12,0}=12! \left(\frac{1}{4!} \tau_{12,3}^4 + \tau_{12,5} \tau_{12,7}+\tau_{12,3}\tau_{12,9}\right).
\end{aligned}
\end{equation}

\section{Generalization to CY \texorpdfstring{$n$}{n}-folds and a motivic conjecture} \label{sec:generalizationNdimensional}

In this section, we will first generalize our results in last section to the Fermat pencil of Calabi-Yau $n$-folds, and then introduce a motivic conjecture that will explain the occurrence of the zeta values in the period matrix.

\subsection{The period matrix of CY \texorpdfstring{$n$}{n}-folds } \label{sec:periodmatrixgenerals}

Based on our computations in Section \ref{sec:numericalperiodmatrix} for the cases where $n=4,5,6,7,8,9,10,11,12$, it is straightforward to conjecture the form of the entries of the period matrix $P$ when $n \geq 4$. First, the period matrix $P$ should be of the form
\begin{equation}
P=P_{\zeta} \cdot P_{\log}.
\end{equation}
If we now define the normalized canonical period $\varpi_{M,j}$ by
\begin{equation}
\varpi_{M,j}:=\frac{1}{(2 \pi i)^j} \, \sum_{k=0}^{j} \binom{j}{k} h_k(\varphi)\,\log^{j-k} \left( (n+2)^{-(n+2)} \varphi \right),
\end{equation}
then the linear transformation between $\varpi_M$ and $\varpi_R$ is given by
\begin{equation}
\varpi_M=P_{\log} \cdot \varpi_R.
\end{equation}
The linear transformation between $\Pi$ and $\varpi_M$ is given by
\begin{equation}
\Pi=P_{\zeta} \cdot \varpi_M.
\end{equation}
The matrix $P_{\zeta}$ is $(n+1) \times (n+1) $ with entries satisfy
\begin{equation}
\left( P_{\zeta} \right)_{jk}=
\begin{cases}
0, \text{if}~ j<k, \\
1,\text{if}~ j=k, \\
\binom{j}{k}\,\left( P_{\zeta} \right)_{j-k,0}, \text{otherwise}.
\end{cases}
\end{equation}
where the indices $j$ and $k$ run from 0 to $n$. Based on our computations in Section \ref{sec:numericalperiodmatrix}, we should have
\begin{equation}
\left( P_{\zeta} \right)_{1,0}=\left( P_{\zeta} \right)_{2,0}=\left( P_{\zeta} \right)_{4,0}=0.
\end{equation}
For every odd integer $k$ such that $3 \leq k \leq n$, there exists a number $\tau_{n,k}$ of the form
\begin{equation} \label{eq:tauvalue}
\tau_{n,k}=-r_{n,k}\,\zeta(k)/(2 \pi i)^k,~r_{n,k} \in \mathbb{Q}_+,
\end{equation}
where the value of $r_{n,k}$ depends on $n$. The value of $\left( P_{\zeta} \right)_{j,0}$ is determined by the number of ways to write $j$ as the sum of odd integers that are also $\geq 3$. More precisely, let us define an odd-sum partition $\mathcal{P}$ of $j $ by
\begin{equation}
\mathcal{P}=\{ \underbrace{p_1,\cdots, p_1}_\textrm{$l_1$}, \underbrace{p_2,\cdots, p_2}_\textrm{$l_2$}, \cdots,\underbrace{p_k,\cdots, p_k}_\textrm{$l_k$} \},
\end{equation}
where each $p_m$ is an odd integer that satisfies $3 \leq p_1 <p_2 <\cdots <p_k \leq n$, and moreover
\begin{equation}
 l_1p_1+l_2p_2+\cdots l_kp_k=j.
\end{equation}
The value of $\left( P_{\zeta} \right)_{j,0}$ should be given by
\begin{equation} \label{eq:oddsumpartitionzeta}
\left( P_{\zeta} \right)_{j,0}=j! \sum_{\mathcal{P}} \left(\frac{1}{l_1!l_2!\cdots l_k!} \tau_{n,p_1}^{l_1}\tau_{n,p_2}^{l_2} \cdots \tau_{n,p_k}^{l_k} \right),
\end{equation}
where the sum is over all the odd-sum partitions of $j$. For example, $9$ has two odd-sum partitions given by
\begin{equation}
3+3+3,~9,
\end{equation}
and $\left( P_{\zeta} \right)_{9,0}$ should be
\begin{equation}
\left( P_{\zeta} \right)_{9,0}=9! \left(\frac{1}{3!} \tau_{n,3}^3 + \tau_{n,9}\right).
\end{equation}
Another example is when $n=12$, which has three odd-sum partitions given by
\begin{equation}
3+3+3+3,~3+9,~5+7,
\end{equation}
and $\left( P_{\zeta} \right)_{12,0}$ should be
\begin{equation}
\left( P_{\zeta} \right)_{12,0}=12! \left(\frac{1}{4!} \tau_{n,3}^4 + \tau_{n,5} \tau_{n,7}+\tau_{n,3}\tau_{n,9}\right).
\end{equation}

\begin{remark}
For a different family of Calabi-Yau $n$-folds that has a large complex structure limit, formula \ref{eq:oddsumpartitionzeta} should also work. 
\end{remark}

\subsection{Mixed Tate motives} \label{sec:mixedTatemotives}

Now we briefly discuss the abelian category of mixed Tate motives defined over $\mathbb{Q}$. Suppose $\textbf{DM}_{\text{gm}}(\mathbb{Q},\mathbb{Q})$ is Voevodsky's category of mixed motives, which is a rigid tensor triangulated category defined over $\mathbb{Q}$. The Tate objects, denoted by $\mathbb{Q}(n),n \in \mathbb{Z}$, generate a full triangulated subcategory. From the paper \cite{LevineVC}, there exists a motivic $t$-structure on $\textbf{DTM}_{\mathbb{Q}}$ whose heart is by definition the abelian category of mixed Tate motives $\textbf{TM}_{\mathbb{Q}}$. The readers are referred to the paper \cite{LevineVC} for more details.

Suppose $\mathcal{A}$ is an abelian category. Given two objects $A$ and $B$ of $\mathcal{A}$, an extension of $B$ by $A$ is a short exact sequence
\begin{equation} \label{eq:TMExtension}
\begin{tikzcd}
0 \arrow[r] &  A \arrow[r] & E \arrow[r] & B \arrow[r] & 0.
\end{tikzcd}
\end{equation}
Two extensions of $B$ by $A$ are said to be isomorphic if there exists a commutative diagram of the form
\begin{equation}
\begin{tikzcd}
0 \arrow[r]  &  A \arrow[r] \arrow[d,"\text{Id}"] & E \arrow[r] \arrow[d,"\simeq"] & B \arrow[r] \arrow[d,"\text{Id}"] & 0 \\
0 \arrow[r] & A \arrow[r] &  E' \arrow[r] & B \arrow[r] & 0
\end{tikzcd}.
\end{equation}
The extension \ref{eq:TMExtension} is said to split if it is isomorphic to the trivial extension
\begin{equation} \label{eq:TMTrivialExtension}
\begin{tikzcd}
0 \arrow[r] & A \arrow[r,"i"] & A \oplus B \arrow[r,"j"] & B \arrow[r]  &  0,
\end{tikzcd}
\end{equation}
where $i$ is the natural injection and $j$ is the natural projection. The set of isomorphism classes of extensions of $B$ by $A$, denoted by $\text{Ext}^1_{\mathcal{A}}(B,A)$, has a group structure induced by Baer summation with the trivial extension  \ref{eq:TMTrivialExtension} being the zero element.

Every object of the abelian category $\textbf{TM}_{\mathbb{Q}}$ can be represented as a successive extension of direct sums of Tate objects $\mathbb{Q}(n)$. It is very important that the extensions of $\mathbb{Q}(0)$ by $\mathbb{Q}(n),n \geq 3$ in $\textbf{TM}_{\mathbb{Q}}$ have an explicit description from Corollary 4.3 of \cite{LevineVC}. First, there exists a  Hodge realisation functor $\mathfrak{R}$ on $\textbf{TM}_{\mathbb{Q}}$ \cite{DeligneTate}
\begin{equation} \label{eq:HodgeRealTate}
\mathfrak{R}:\textbf{TM}_{\mathbb{Q}} \rightarrow \textbf{MHS}_{\mathbb{Q}},
\end{equation}
where $\textbf{MHS}_{\mathbb{Q}}$ is the abelian category of mixed Hodge structures. From the paper \cite{DeligneTate}, the Hodge realisation functor $\mathfrak{R}$ is exact and full-faithful, hence it induces an injective homomorphism from $\text{Ext}^1_{\textbf{TM}_{\mathbb{Q}}}\left(\mathbb{Q}(0),\mathbb{Q}(n)\right)$ to $\text{Ext}^1_{\textbf{MHS}_{\mathbb{Q}}}\left(\mathbb{Q}(0),\mathbb{Q}(n)\right)$. The latter extension group has a very simple description \cite{PetersSteenbrink}
\begin{equation} \label{eq:THexe3}
 \text{Ext}^1_{\textbf{MHS}_{\mathbb{Q}}}\left(\mathbb{Q}(0),\mathbb{Q}(n)\right) \simeq  \mathbb{C}/(2 \pi i)^n\,\mathbb{Q}.
\end{equation}
While from \cite{DeligneFund,LevineVC}, for an integer $n \geq 3$, the image of $\text{Ext}^1_{\textbf{TM}_{\mathbb{Q}}}\left(\mathbb{Q}(0),\mathbb{Q}(n)\right)$ in $ \mathbb{C}/(2  \pi  i)^n\,\mathbb{Q}$ under Hodge realisation is the coset of rational multiples of $\zeta(n)$. Suppose $M$ is a mixed Tate motive that forms an extension of $\mathbb{Q}(0)$ by $\mathbb{Q}(2k+1),k \geq 1$
\begin{equation}
\begin{tikzcd}
0 \arrow[r] &  \mathbb{Q}(2k+1) \arrow[r] & M \arrow[r] & \mathbb{Q}(0) \arrow[r] & 0,
\end{tikzcd}
\end{equation}
then its period matrix is of the form \cite{DeligneFund}
\begin{equation}
\begin{pmatrix}
1, & 0, \\
r\,\zeta(2k+1),& (2 \pi i)^{2k+1}, \\
\end{pmatrix},~r \in \mathbb{Q}.
\end{equation}

\subsection{A motivic conjecture} \label{sec:conclusionlast}

Now we are ready to give an explanation to the occurrence of zeta values in the period matrix $P$. The intuitive idea is that the splits of the pure Hodge structure in the formula \ref{eq:decompositionpureHodgestructure} and the limit MHS in the formula \ref{eq:splitsoflmhs} are `motivic'. 

\begin{conjecture} \label{conjecturemixedmotive}
Given a rational point $\varphi \in \mathbb{Q}-\{0,1 \}$, the pure motive $h^n(\mathscr{X}_\varphi)$ splits into the direct sum
\begin{equation}
h^n(\mathscr{X}_\varphi)= h^{n,a}(\mathscr{X}_\varphi) \oplus h^{n,t}(\mathscr{X}_\varphi),
\end{equation}
while the Hodge realization of the direct summand $h^{n,t}(\mathscr{X}_\varphi)$ (resp. $h^{n,a}(\mathscr{X}_\varphi)$) is the pure Hodge structure $\left(H^{n,t} (X, \mathbb{Q}),F^{p,t}_\varphi \right)$ (resp. $\left(H^{n,a} (X, \mathbb{Q}),F^{p,a}_\varphi \right)$). The pure motive $ h^{n,t}(\mathscr{X}_\varphi)$ has a limit at $\varphi=0$, $\mathbf{M}_{\text{lim}}$, that is a mixed Tate motive, whose Hodge realization is the limit MHS $ \left(H^{n,t}(X,\mathbb{Q}),W^t_q, F^{p,t}_{\text{lim}} \right)$.
\end{conjecture}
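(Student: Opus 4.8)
The plan is to follow the blueprint of the threefold case in \cite{KimYang}, splitting the assertion into two essentially independent tasks. First, I would realize the Hodge-theoretic decomposition \ref{eq:decompositionpureHodgestructure} by a genuine $\mathbb{Q}$-rational idempotent correspondence, giving a splitting $h^n(\mathscr{X}_\varphi)=h^{n,a}(\mathscr{X}_\varphi)\oplus h^{n,t}(\mathscr{X}_\varphi)$ of Voevodsky motives whose realizations are the prescribed pure Hodge structures. Second, I would produce $\mathbf{M}_{\text{lim}}$ as the motivic nearby cycle of the relative motive $h^{n,t}$ at $\varphi=0$ and show that it lands in the category of mixed Tate motives $\textbf{TM}_{\mathbb{Q}}$ with the correct Hodge realization.

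For the first task, I would exploit the large group of diagonal symmetries of the Fermat pencil. Let $G$ be the finite abelian group of automorphisms $X_i\mapsto \zeta_{n+2}^{a_i}X_i$ with $\sum_i a_i\equiv 0\pmod{n+2}$, taken modulo the overall scaling; it preserves every fiber $\mathscr{X}_\varphi$ and fixes the holomorphic form $\Omega_\varphi$ together with all of its derivatives $\vartheta^i\Omega_\varphi$. It is classical that the $G$-invariant primitive cohomology $H^n_{\text{prim}}(\mathscr{X}_\varphi)^G$ is exactly the $(n+1)$-dimensional space $H^{n,t}(X,\mathbb{C})$. The averaging correspondence $p_G:=\frac{1}{|G|}\sum_{g\in G}\Gamma_g$ is an idempotent, and although each $g$ is defined only over $\mathbb{Q}(\zeta_{n+2})$, the Galois action $\sigma_c:\zeta_{n+2}\mapsto\zeta_{n+2}^c$ permutes $G$ via $g\mapsto g^c$ for $c\in(\mathbb{Z}/(n+2))^{\times}$, so $p_G$ is Galois-stable and hence $\mathbb{Q}$-rational. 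I would then use $p_G$ to split off $h^{n,t}(\mathscr{X}_\varphi)$ as a $\mathbb{Q}$-rational direct summand, with $h^{n,a}(\mathscr{X}_\varphi)$ the complement, and verify that the realizations recover the two pieces of \ref{eq:decompositionpureHodgestructure} by comparing the $G$-invariant description with the period-theoretic definition of $H^a_n(X,\mathbb{Q})$.

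For the second task, I would use that the large complex structure limit is the toric boundary: dividing $f_\psi$ by $-(n+2)\psi$ and letting $\varphi=\psi^{-(n+2)}\to 0$ sends the special fiber to $\prod_{i=0}^{n+1}X_i=0$, a normal crossings divisor in $\mathbb{P}^{n+1}$ whose closed strata are coordinate-linear subspaces, hence have \emph{Tate} motives. After a semistable reduction and resolution of the total space near $\varphi=0$, I would apply Ayoub's unipotent motivic nearby cycle functor to $h^{n,t}$ over the punctured disc to obtain $\mathbf{M}_{\text{lim}}$. The Rapoport--Zink/Steenbrink weight filtration on the nearby cycle is assembled from the motives of the strata of the special fiber, all of which are Tate; combined with the maximal unipotency $N^{n+1}=0$ and the computation \ref{eq:semisimplificationLMHS} giving $\mathrm{gr}^W\mathbf{M}_{\text{lim}}=\bigoplus_{k=0}^n\mathbb{Q}(-k)$, this would place $\mathbf{M}_{\text{lim}}$ in the triangulated category $\textbf{DTM}_{\mathbb{Q}}$ and, via Levine's motivic $t$-structure \cite{LevineVC}, in $\textbf{TM}_{\mathbb{Q}}$. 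The compatibility of Hodge realization with the motivic and Hodge-theoretic nearby cycle functors would then identify the realization of $\mathbf{M}_{\text{lim}}$ with the limit MHS $\left(H^{n,t}(X,\mathbb{Q}),W^t_q,F^{p,t}_{\text{lim}}\right)$.

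The hard part will be the second task. Matching the period-theoretic splitting with the $G$-invariant description in the first step is essentially linear algebra once the eigenspace structure is in hand, but the limit requires both constructing an explicit semistable model near $\varphi=0$ and controlling precisely how its strata contribute, and, more seriously, the delicate point that the motivic nearby cycle of a mixed Tate input is again mixed Tate. The latter is where the argument remains genuinely conjectural: it leans on properties of Ayoub's functor together with the motivic $t$-structure that are fully available only on the Tate subcategory, which is exactly why the statement is phrased as a conjecture rather than a theorem. Once $\mathbf{M}_{\text{lim}}$ is known to be mixed Tate, the fact that $\text{Ext}^1_{\textbf{TM}_{\mathbb{Q}}}(\mathbb{Q}(0),\mathbb{Q}(2k+1))$ is spanned by $\zeta(2k+1)$ forces the entries of $P_\zeta$ to be the rational multiples of zeta values found numerically, closing the loop with Section \ref{sec:numericalperiodmatrix}.
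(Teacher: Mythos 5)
You are attempting to prove a statement that the paper itself does not prove: it is stated as Conjecture \ref{conjecturemixedmotive}, supported only by the numerical computations of Section \ref{sec:numericalperiodmatrix}, the analogy with the threefold case of \cite{KimYang}, and a remark; no argument is given. So there is no paper proof to compare against, and your proposal must be judged as an attempt on an open problem. As a research program it is sensible and close in spirit to what \cite{KimYang} does for threefolds: the averaging correspondence $p_G=\frac{1}{|G|}\sum_g \Gamma_g$ is Galois-stable and hence $\mathbb{Q}$-rational, and using it (together with the standard hypersurface Chow--K\"unneth projectors to split off the algebraic classes when $n$ is even, a step you omit) to cut out $h^{n,t}(\mathscr{X}_\varphi)$ with the prescribed realization is plausibly within reach. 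But the proposal does not close the conjecture, and the failure is concentrated exactly where you wave your hands.

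Two concrete gaps. First, the strata argument for Tateness cannot work as stated: the Rapoport--Zink/Steenbrink assembly from the strata of a semistable special fiber computes the limit of \emph{all} of $H^n$, and the limit MHS on the summand $H^{n,a}(X,\mathbb{Q})$ is not Hodge--Tate (its monodromy at $\varphi=0$ is not even unipotent), so after the required ramified base change and resolution the exceptional strata cannot all have Tate motives --- if they did, the full limit would be Hodge--Tate, contradiction. To salvage this you must first apply $p_G$ to the degeneration and prove that the idempotent commutes with Ayoub's nearby cycle functor, then establish Tateness of the invariant pieces of the strata; none of this is supplied. Second, and more fundamentally, your inference that $\mathrm{gr}^W\mathbf{M}_{\text{lim}}=\bigoplus_{k=0}^n \mathbb{Q}(-k)$ ``places $\mathbf{M}_{\text{lim}}$ in $\textbf{DTM}_{\mathbb{Q}}$'' is a non sequitur: the computation \ref{eq:semisimplificationLMHS} is a statement about the Hodge \emph{realization}, and there is no known criterion by which a motive whose realization is Hodge--Tate must itself be of Tate type --- that implication is a conjecture of Hodge/Tate-conjecture calibre, and it is precisely the reason the paper phrases the statement as a conjecture rather than a theorem. (A minor overstatement at the end: $\text{Ext}^1_{\textbf{TM}_{\mathbb{Q}}}(\mathbb{Q}(0),\mathbb{Q}(2k+1))$ controls only the simple subquotient extensions; the product entries such as $(P_{\zeta})_{6,0}\sim \tau_{n,3}^2$ require the full framed iterated-extension structure as in Section \ref{sec:periodmatrixgenerals}, though your observation does correctly account for the vanishing $(P_{\zeta})_{2,0}=(P_{\zeta})_{4,0}=0$ via $\text{Ext}^1(\mathbb{Q}(0),\mathbb{Q}(2k))=0$.) To your credit, you flag the decisive step as conjectural yourself; just be aware that your first task is honest mathematics, while the second is currently not a proof outline but a restatement of the conjecture in Ayoub-functor language.
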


\begin{remark}
This conjecture admits a natural generalization to arbitrary algebraic families of Calabi-Yau $n$-folds that have a large complex structure limit, but to provide further examples is certainly very challenging.
\end{remark}

\section*{Acknowledgments}

The author is grateful to Shamit Kachru and Minhyong Kim for a reading of the draft and many helpful comments.

\appendix

\end{document}